\makeatletter \@addtoreset{equation}{section} \makeatother
\def\Sing{\operatorname{Sing}}
\def \mult {\mathrm{mult}}
\DeclareMathAlphabet{\mathbbold}{U}{bbold}{m}{n}
\def \k {\mathbbold{k}}
\newtheorem{theorem}[equation]{Theorem}
\newtheorem{prop}[equation]{Proposition}
\newtheorem{lemma}[equation]{Lemma}
\theoremstyle{definition}
\newtheorem{example}[equation]{Example}
\newtheorem{definition}[equation]{Definition}
\theoremstyle{remark}
\newtheorem{zam}[equation]{Remark}
\newcommand{\XXX}{{\mathcal{X}_{23}}}
\newcommand{\QQQ}{{\mathcal Q}}
\newcommand{\CCC}{{\mathcal C}}
\newcommand{\CC}{{\mathbb C}}
\newcommand{\BQ}{{D}}
\newcommand{\RR}{{\mathbb R}}
\newcommand{\PP}{{\mathbb P}}
\newcommand{\QQ}{{\mathbb Q}}
\newcommand{\K}{{\mathcal K}}
\newcommand{\GG}{{\mathbb G}}
\newcommand{\tit}{On Unirationality of Quartics over non Algebraically Closed Fields}
\def \ge {\geqslant}
\def \le {\leqslant}
\begin{document}
\begin{title}
\tit
\end{title}
\author{Nikolai F.\,Zak}
%\date{}
%\address{Steklov Institute of Mathematics, 8 Gubkin street, Moscow 117966, Russia}
\email{nzak@mccme.ru}
\maketitle

\begin{abstract}
We give examples of smooth $\k$-unirational line-free quartic hypersurfaces over a non algebraically closed field
$\k$. Unlike other methods of proving unirationality, our method does not rely on existence of linear spaces on
quartics. %We also give an example of unirational three-dimensional quartic which is not unirational over $\k$ though
%it has smooth $\k$-points and examples of unirational three-dimensional complete intersections of a quadric and a
%cubic such that all their $\k$-points are contained in a $\k$-unirational hyperplane section.
\end{abstract}

\section{Introduction}
The question of unirationality of algebraic varieties is one of the fundamental and, at the same time, poorly
understood questions of algebraic geometry. It is not even known whether there exists a not unirational Fano
variety. In particular, there is no examples of non unirational quartic threefolds. To the author's knowledge, all
available examples of smooth unirational varieties which are not unirational over a non algebraically closed field
$\k$ are varieties without $\k$-points (cf. [8], [9]).

%In this paper we suggest a new method of proving unirationality of higher-dimensional quartics. This method allows
%to give examples of $\k$-unirational line-free quartics. We also give an example of unirational three-dimensional
%quartic which is not unirational over $\k$ though it has smooth $\k$-points and examples of unirational
%three-dimensional complete intersections of a quadric and a cubic such that all their $\k$-points are contained in a
%$\k$-unirational hyperplane section.

Let $X$ be an algebraic variety over a field $\k\subset\overline\k$ of characteristic zero, where $\overline\k$ is
the algebraic closure of $\k$. In what follows we denote by $\overline O$ the algebraic closure
$O\times_{\k}\overline{\k}$ of an object $O$ (that is, a field, a variety or a map), defined over $\k$. Unless
specified otherwise, we assume that all algebraic objects we deal with are defined over $\k$. We say that variety
$X$ is smooth if $\overline X$ is smooth. By $\GG(1,n)$ denote the Grassmanian of lines in $\PP^n$.

%“ем не менее, все рассуждени€ обобщаютс€ на произвольное поле $\k$
%при $\rm{char}(k)>3$, содержащее достаточно большое число
%элементов.

\begin{definition}\label{def}
A variety $X$ is called {\it $\k$-unirational} (or {\it unirational over $\k$}) if the function field $\k(X)$ is a
subfield of a purely transcendental extension of $\k$ or, in other words, if there exists a dominant rational map
$\PP^N\dashrightarrow X$ defined over $\k$. Variety $X$ is called {\it unirational} if the variety $\overline{X}$ is
unirational over $\overline{\k}$.
%пол€ $\overline\k$.
%–ациональные многообрази€, определенные над $\k$,
%дл€ которых наличие $\k$-точки обеспечивает $\k$-рациональность,
%называютс€ квазитривиальными над $\k$.
\end{definition}

In [1] it is shown that every smooth hypersurface $H_d\subset\PP^n$ of degree $d$ is unirational when $n$ is
sufficiently large. However, the corresponding estimates for $n$ are far from being optimal, so it is important to
develop different ways of proving unirationality. The method used in [1] develops the approach suggested by
U.\,Morin in [5] and is based on the existence (for certain $m$) of an $m$-dimensional linear subspace $L^m\subset
H_d$. Following this method, the authors consider the family of hypersurfaces $H_{d-1}\subset L^{m+1}$ obtained by
intersecting various subspaces $L^{m+1}\supset L^m$ with $H_d$. In section 1.2 of [1] the authors point out that
they do not know of any method of proving unirationality of nonsingular hypersurfaces $H_d$ of degree $d>3$ that
would not use linear subspaces $L\subset H_d$ of positive dimension. In particular, they do not know of any example
of nonsingular $\k$-unirational hypersurface $H_4\subset\PP^n$ without lines. In several recent papers (cf. [3] and
references therein) M.\,Marchisio develops another method of proving unirationality of quartics following an
approach of B.\,Segre in [4]. This method relies on the existence of lines on the quartic, so it does not solve the
problem stated above.

In the present paper we give examples of   nonsingular unirational quartics defined over the field $\RR$ of real
numbers that do not contain real lines. More precisely, we prove the following

\begin{theorem}\label{th}
For every $n\geqslant 8$ there exists an unirational over $\RR$ nonsingular hypersurface $H_4\subset\PP^n$ of degree
four which does not contain any line defined over $\RR$.
\end{theorem}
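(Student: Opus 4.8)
The plan is to produce the example explicitly rather than by a general-position argument, exploiting the real structure to kill all real lines while retaining enough rational structure over $\RR$ to run a Morin/Segre-type unirationality construction internally. The key tension is: classical unirationality proofs for quartics use a line $L\subset H_4$ as a pivot, but over $\RR$ we must have no such line — so the "pivot" has to be a $\k$-rational object of a different kind. I would look for a quartic containing a pair of conjugate lines $\ell, \bar\ell$ (so the pair is defined over $\RR$ though neither line is), or more robustly a $\RR$-rational curve or a $\RR$-rational conic on $H_4$, and use the family of hyperplanes through that curve the way [1] uses hyperplanes through a line.

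First I would set up a concrete family: take coordinates $x_0,\dots,x_n$ on $\PP^n$ and consider quartics of the shape $F = Q_1(x)\,\ell_1 + Q_2(x)\,\ell_2 + \Phi(x_4,\dots,x_n)$ (schematically), where the first two variables are arranged so that the locus $x_0=\dots=x_3=0$ — a $\PP^{n-4}$ defined over $\RR$ — meets $H_4$ in a hypersurface to which one can apply induction on $n$, while the cross terms are chosen to be a definite (anisotropic) real form, e.g. incorporating sums of squares like $x_0^2+x_1^2$, so that no real line can lie on $H_4$. Second, I would verify smoothness of $\overline X$: this is a Jacobian-rank computation on the explicit $F$, and choosing the coefficients generically within the prescribed shape makes $\Sing(\overline X)=\varnothing$ — this is routine but must be done. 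Third, the line-free-over-$\RR$ claim: a real line $L\subset H_4$ would give, after restricting $F$ to $L\cong\PP^1_\RR$, the zero polynomial; by tracking the definite/anisotropic part of $F$ I would show the vanishing of that part forces the line to lie in a locus where $H_4$ is visibly a cone or where smoothness fails — contradiction. The cleanest version: arrange that every line on $\overline X$ is conjugate to a genuinely complex line, i.e. the Fano scheme $F_1(\overline X)$ has no $\RR$-points, which for a hypersurface is a closed condition one can force by the choice of coefficients and check on one explicit member.

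The unirationality over $\RR$ is the heart of the matter and the step I expect to be the main obstacle. Having no real line, I would instead use a $\RR$-rational surface (or the conjugate-line pair) $S\subset H_4$ and the linear system of $\PP^{n-\dim S -1+\dim S}$'s — concretely the pencils/nets of hyperplane sections through $S$ — to express $H_4$ birationally as fibered over a rational base with fibers lower-degree (cubic or quadric) hypersurfaces that carry a rational point over the function field of the base, coming from $S$ itself. Each such fiber is then unirational over that function field by the classical results (a cubic with a rational point is unirational; a quadric with a rational point is rational), and assembling these gives a dominant rational map from a projective space over $\RR$ onto $H_4$, i.e. $\RR$-unirationality. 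For the base case $n=8$ I would exhibit the construction in full on the explicit $F$; the inductive step $n\to n+1$ is then the observation that a hyperplane section of a good example is again a good example, pushing the construction down from $H_4\subset\PP^{n+1}$ to $H_4\subset\PP^n$. The delicate point throughout is keeping the auxiliary rational subvariety $S$ defined over $\RR$ while simultaneously keeping $H_4$ free of real lines and smooth — these pull in opposite directions, and the bound $n\geqslant 8$ is presumably exactly what is needed to have room for both.
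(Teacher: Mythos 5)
Your plan diverges from the paper's and the central step --- $\RR$-unirationality --- has a genuine gap. You propose to take a $\RR$-rational surface (or conjugate pair of lines) $S\subset H_4$ and ``express $H_4$ birationally as fibered over a rational base with fibers lower-degree (cubic or quadric) hypersurfaces'' by taking linear sections through $S$. But the Morin--Segre degree-dropping mechanism only works when the pivot is a \emph{hypersurface of its ambient linear space}: if $L^m\subset H_d$ is linear and $L^{m+1}\supset L^m$, then $F|_{L^{m+1}}$ is divisible by the linear form cutting $L^m$, so the residual has degree $d-1$. For a non-linear $S$ (a conic, a quadric surface, a union of conjugate lines) contained in a general linear section $G\cap H_4$ with $\dim G>\dim\langle S\rangle$, $S$ is merely a codimension-one subvariety of the irreducible quartic $G\cap H_4$, not a component of it; nothing splits off and the fibers of your family remain quartics. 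The splitting $F|_M=q\cdot q'$ happens only on the fixed span $M=\langle S\rangle$ itself, which gives no fibration. This is precisely why the paper does something much more involved: it requires $H_4$ to contain a $\k$-rational quadric threefold $Q$ with \emph{multiplicity two} in its span $M=\PP^4$, slices by $\PP^5$'s through $M$ to reduce to fourfold quartics over $\k(\PP^{n-5})$ (Proposition \ref{uni}), then passes to the cone over such a quartic, intersects with an auxiliary quadric to obtain a $(2,3)$ complete intersection $\XXX$ with a double point (Proposition \ref{proj}), and proves $\k$-unirationality of $\XXX$ via a tangent-cone construction over a $\k$-unirational surface of lines (Proposition \ref{S}). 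None of this is recoverable from the fibration you describe, and ``the rational point coming from $S$ itself'' does not lie on the residual fiber in any case.

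Two further points. Your induction ``a hyperplane section of a good example is again a good example'' runs the wrong way: unirationality is not inherited by hyperplane sections, and the paper instead proves the statement for all $n\ge 8$ simultaneously by working over the function field of the base $B=\PP^{n-5}$ of the family of $\PP^5$-sections. Finally, for line-freeness the paper uses a much cleaner device than tracking an anisotropic part of $F$: it chooses $Q$ to be the real sphere $x_0^2+x_1^2+x_2^2+x_3^2=x_4^2$ and takes $H_4\supset 2Q$ whose real locus avoids a real hyperplane $\Gamma$ (an open, nonempty condition in the analytic topology, hence compatible with Zariski-generic smoothness via Lemma \ref{l}); since every real line in $\PP^n$ meets every real hyperplane in a real point, such an $H_4$ contains no real line. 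Your anisotropic-form argument could perhaps be made to work on an explicit member, but as written it is not carried out, and the tension you correctly identify --- keeping a rational pivot while excluding real lines --- is resolved in the paper exactly by the fact that a quadric threefold can be $\RR$-rational (have a smooth real point) while containing no real lines, something no linear pivot can do.
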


The structure of the paper is as follows. In Section \ref{0} we prove Proposition \ref{S} that gives a sufficient
condition for $\k$-unirationality of complete intersections of a quadric and a cubic. In Section \ref{1}, using a
birational transformation to the complete intersection of a quadric and a cubic with the required properties, we
prove in Proposition \ref{uniquad} the unirationality over $\k$ of a general singular four-dimensional quartics
that contains a $\k$-rational quadric threefold with the multiplicity two. %In Section \ref{3}, which does not
%contribute to the proof of Theorem \ref{th} but has an interest on it's own, we provide Example \ref{deklist} of
%three-dimensional quartic which is not unirational over $\k$ in spite of having smooth $\k$-points and Example
%\ref{notdense} of three-dimensional complete intersections of a quadric and a cubic such that all their $\k$-points
%are contained in a $\k$-unirational hyperplane section.
In Section \ref{2} we prove Proposition \ref{uni} on the $\k$-unirationality of general higher-dimensional quartic
containing $\k$-rational three-dimensional quadric with multiplicity two. The method is to reduce this question to
the case of quartics studied in Section \ref{1}. In Section \ref{4} we show the existence of a nonsingular line-free
real quartic that satisfies the conditions of Proposition \ref{uni} and thus complete the proof of Theorem \ref{th}.

\section{$\k$-Unirational Complete Intersection of a Quadric and a Cubic}\label{0}
Unirationality of the nonsingular complete intersection $\XXX\subset\PP^n$ of a quadric and a cubic  for $n\geqslant
5$ is well known, see the sketch of the proof in Example 10.1.3 from [2]. However, the standard proof can not be
applied over a non algebraically closed field because it relies on the family of planes lying on a
higher-dimensional quadric.

In this section we give a sufficient condition for $\k$-unirationality of an irreducible complete intersection of a
quadric and a cubic $\XXX\subset\PP^n$. We assume that $\XXX$ has isolated singularities and does not have singular
points $x$ of multiplicity $\mult_x(\XXX)>2$. Note that in this case the tangent space $T_x(\XXX)$ at an arbitrary
point $x\in\XXX$ is a proper subspace of $\PP^n$ (otherwise both quadric and cubic are singular at $x$ and
$\mult_x(\XXX)\ge 4$).

\begin{zam}
If $\mult_x{\XXX}>2$, $\dim{\XXX}>1$, and $\XXX$ is not a cone, then birational projection $\pi_x:
\XXX\dashrightarrow\PP^{n-1}$ maps $\XXX$ to an irreducible hypersurface $H\subset\PP^{n-1}$ of degree less than
three; hypersurface $H$ is known to be unirational over $\k$ if it is not a cubic cone and has a smooth $\k$-point.
\end{zam}

%\begin{definition}\label{udob}
%Let $S\subset\XXX$ be an arbitrary surface. We call $S$ to be {\it convenient} if either $\XXX$ is nonsingular along
%$S$ or for some point $x\in\overline{S}\cap\Sing{\overline{\XXX}}$ holds
%$$
%\overline{S}\nsubseteq\Sing{(T_x(\overline{\XXX})\cap\overline{\XXX})}.
%$$
%\end{definition}
%In particular, if $\XXX$ is nonsingular then each surface $S\subset\XXX$ is convenient.
\begin{prop}\label{S}
Let $\XXX\subset\PP^n$, $n>5$, be the intersection of a nonsingular quadric $\QQQ$ and a cubic $\CCC$ such that
$\XXX$ has isolated singularities and does not have singular points of multiplicity more than two and contains a
$\k$-unirational surface $S\not\subset\Sing{\XXX}$. Suppose $S$ is covered by a family of lines $\mathcal F$ such
that through a general point $s\in S$ there passes the only line $l_s\in\mathcal F$. In this case $\XXX$ is
unirational over $\k$.
\end{prop}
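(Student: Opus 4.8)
The plan is to build a dominant $\k$-rational map to $\XXX$ out of copies of the surface $S$, using the lines $l_s$ to manufacture birational self–maps of $\XXX$.

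First I would attach to a general point $s\in S$ a birational involution $\iota_s$ of $\XXX$. Since $l_s\subset\XXX\subset\QQQ$, a general $2$-plane $\Pi\supset l_s$ meets $\QQQ$ in $l_s\cup l'$ with $l'$ a line, and meets $\CCC$ in $l_s\cup C'$ with $C'$ a conic; the hypotheses on $\Sing\XXX$ (in particular that $T_x\XXX$ is a proper subspace of $\PP^n$) guarantee that this is the generic picture, so that $\Pi\cap\XXX=l_s\cup(l'\cap C')$ is $l_s$ together with two residual points. Hence the projection $\XXX\dashrightarrow\PP^{n-2}$ from $l_s$ is generically two--to--one, and $\iota_s\colon\XXX\dashrightarrow\XXX$ is its deck involution: for general $x\in\XXX$, $\iota_s(x)$ is the point of $\langle l_s,x\rangle\cap\XXX$ residual to $l_s$ and to $x$. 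Two remarks will be used repeatedly. (i) $\iota_s$ is defined over the field of definition of $s$: $x$ itself is a root of the residual binary quadratic, hence the other root $\iota_s(x)$ is rational over $\k(s,x)$; equivalently $\iota\colon S\times\XXX\dashrightarrow\XXX$, $(s,x)\mapsto\iota_s(x)$, is defined over $\k$. (ii) The points $x$ and $\iota_s(x)$ both lie on $l'$, which is the unique line on $\QQQ$ through $x$ meeting $l_s$; it meets $l_s$, hence $S$, at a point $q\in S$ conjugate to $x$ with respect to $\QQQ$, and $\iota_s(x)$ is the third intersection point of the line $\overline{xq}\subset\QQQ$ with $\CCC$.

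Now fix $N\ge1$ and consider the rational map, defined over $\k$ by remark (i),
\[
\Psi_N\colon S^{\,N+1}\dashrightarrow\XXX,\qquad \Psi_N(x_0,s_1,\dots,s_N)=\iota_{s_N}\bigl(\cdots\iota_{s_1}(x_0)\cdots\bigr),
\]
where $x_0\in S$ is regarded as a point of $\XXX$ via $S\subset\XXX$. The source $S^{N+1}$ is $\k$-unirational, so it suffices to prove that $\Psi_N$ is dominant for some $N$. Let $V_k\subseteq\XXX$ be the closure of the image of $\Psi_k$, so that $S=V_0\subseteq V_1\subseteq\cdots$ and $V_{k+1}=\overline{\bigcup_{s\in S}\iota_s(V_k)}$. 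By remark (ii), for a point $p\in\XXX$ the set $\{\iota_s(p):s\in S\}$ is the set of third intersection points with $\CCC$ of the lines $\overline{pq}\subset\QQQ$, $q\in S\cap p^{\perp}$ (polar hyperplane of $p$); since $S\cap p^{\perp}$ is a curve, this set is at most one–dimensional, and therefore $\dim V_{k+1}\le\dim V_k+1$. I would then show that this inequality is an equality as long as $V_k\ne\XXX$, so that $\dim V_k=\min(k+2,\dim\XXX)$; taking $N=n-4$ then makes $\Psi_{n-4}$ dominant and proves the proposition (for $n=6$ already $\Psi_2\colon S^3\dashrightarrow\XXX$ is dominant).

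The hard part is the equality $\dim V_{k+1}=\dim V_k+1$, i.e.\ that the increasing chain $(V_k)$ does not stabilise below $\XXX$. A stabilisation at $V\subsetneq\XXX$ would mean that the two–dimensional family $\{\iota_s\}_{s\in S}$ preserves a proper subvariety $V\supseteq S$; using the involution identity $\iota_s\circ\iota_s=\mathrm{id}$, the fibre of $\iota\colon S\times V\dashrightarrow\XXX$ over a general point $x$ of its image is $\{s\in S:\iota_s(x)\in V\}$, which has dimension $\ge\dim S-1=1$ exactly when the curve $\{\iota_s(x):s\in S\}$ is not contained in $V$, so one must exclude that this curve always lies in $V$. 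I expect to do this by an infinitesimal argument at a general point $p$ of a hypothetical invariant $V$: one computes the tangent directions along $p$ of the curves $\{\iota_s(p'):s\in S\}$ for $p'$ running over the $\iota$–orbit of $p$, and shows, using that $S$ is two–dimensional, that $\QQQ$ is nonsingular, and that $l_s$ is the only line of $\mathcal F$ through $s$, that after finitely many steps these directions span $T_p\XXX$. A secondary, routine matter to be checked along the way is that the genericity statements used above really hold for general choices — the plane section of $\QQQ$ being a genuine line pair, $C'$ an irreducible conic, $S\cap p^{\perp}$ a reduced curve whose image under "take the third point on $\CCC$" is again a curve, the maps $\iota_s$ having the expected indeterminacy — which follows from $\XXX$ having only isolated singularities of multiplicity at most two and from $S\not\subset\Sing\XXX$.
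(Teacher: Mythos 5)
Your construction of the involutions $\iota_s$ and your remarks (i)--(ii) are correct: the plane $\langle l_s,x\rangle$ cuts $\XXX$ in $l_s$ plus two residual points lying on the residual line $l'\subset\QQQ$, so the deck involution of the projection from $l_s$ is defined over $\k(s)$, the family $\iota\colon S\times\XXX\dashrightarrow\XXX$ is defined over $\k$, and $\{\iota_s(x)\}_{s\in S}$ is at most a curve, whence $\dim V_{k+1}\le\dim V_k+1$. The reduction of the proposition to ``$\Psi_N$ is dominant for some $N$'' is also sound. But the proof is not complete: its entire content has been pushed into the unproved claim that $\dim V_{k+1}=\dim V_k+1$ whenever $V_k\ne\XXX$, for which you offer only the statement that you \emph{expect} an infinitesimal argument to succeed. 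This is not a routine verification. Even the first step (that $\bigcup_s\iota_s(S)$ sweeps out a threefold rather than being contained in $S$) is unchecked, and at a general step you must exclude that the curves $\{\iota_s(p)\}_{s\in S}$ --- which depend on $p$ only through the hyperplane section $S\cap p^{\perp}$ and the ``third intersection point with $\CCC$'' map --- are all trapped in a proper invariant subvariety. Your sketch does not identify which hypothesis ($\QQQ$ nonsingular, isolated double points, uniqueness of $l_s$) such an invariant subvariety would contradict, so the heart of the argument is missing.

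For comparison, the paper avoids the iteration entirely: for each $s\in S$ it uses not a single line but the whole $(n-4)$-dimensional quadric $Q_s$ of generatrices of the cone $\K_s=\QQQ\cap T_s(\XXX)$ (lines on $\QQQ$ through $s$ tangent to $\XXX$ there), assembled into a quadric bundle over $S$ with the rational section $s\mapsto l_s$ landing at smooth points; this bundle is $\k$-unirational by Lemma \ref{unibund}, and each generatrix meets $\XXX$ in exactly one further point, giving a map onto $\XXX$ in one step. Even there, dominance is not free: it is Lemma \ref{lll}, proved via Segre's theorem on varieties carrying a $(2n-3)$-dimensional family of lines together with Zak's Theorem on Tangencies. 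That the one-shot dominance already requires such tools strongly suggests that your iterated dominance claim, which subsumes it, cannot be dispatched by a generic infinitesimal computation; until that step is supplied, the proposal is a plausible strategy rather than a proof.
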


\begin{proof}
%ѕри $n<5$ утверждение предложени€ очевидно, поэтому будем считать $n\geqslant 5$.
Since $S\not\subset\Sing{\XXX}$, the tangent spaces $T_s(\QQQ)$ and $T_s(\CCC)$ at a general point $s\in S$ are
different hyperplanes in $\PP^n$. For such $s$ consider the $(n-3)$-dimensional quadric cone
$$
{\K_s}={\QQQ}\cap T_s(\QQQ)\cap T_s(\CCC)=\QQQ\cap T_s(\XXX).
$$
Let ${\mathcal X}\subset\GG(1,n)\times S$ be the closure of the family of generatrices of the cones $\K_s$. Variety
${\mathcal X}$ is a bundle over $S$ and it's fibers are $(n-4)$-dimensional quadrics $Q_s$ on $\GG(1,n)$ with
respect to Pl\"{u}cker embedding. The lines $l_s$ define the rational section $\varphi: S\dashrightarrow {\mathcal
X}$ of this bundle.

\begin{lemma}\label{ns}
For a general point $s\in S$, the quadric $Q_s$ is irreducible and the point $\varphi(l_s)\in Q_s$ is nonsingular.
\end{lemma}
\begin{proof}
Since $\QQQ$ is nonsingular, the variety ${\QQQ}\cap T_s(\QQQ)$ is a cone over a nonsingular $(n-3)$-dimensional
quadric $Q^{n-3}$. Since $n>5$ quadric $Q_s$ is irreducible as a hyperplane section of $Q^{n-3}$.

Since the quadric $Q^{n-2}=T_s(\QQQ)\cap\QQQ$ is a cone with the only vertex in $s$, the hyperplane section
$\K_s=Q^{n-2}\cap T_s(\CCC)$ can be a cone over a singular quadric only if $T_s(\CCC)$ is tangent to the cone
$Q^{n-2}$ along it's generatrix. Thus, if $Q_s$ is singular in $\varphi(l_s)$, then
$$
l_s\subset\Sing{(T_s(\QQQ)\cap\XXX)}.
$$
If for a general point $s'\in l_s$ the quadric $Q_{s'}$ is singular in $\varphi(l_{s'})=\varphi(l_s)$, then
$l_s\subset\Sing(\XXX)$. If this is true for a general point $s\in S$, then $S\subset\Sing{\XXX}$; this contradicts
our assumptions.
\end{proof}
%≈сли обща€ квадрика $Q_s$ приводима, то рассмотрим в качестве $\K_s$ проективное пространство
%$\PP^{n-3}\subset\K_s$, содержащее пр€мую $l_s$, а в качестве ${\mathcal X}$ соответствующую неприводимую компоненту
%$\XXX$. ¬ этом случае ${\mathcal X}$ унирационально над $\k$ как расслоение на проективные пространства над
%$\k$-унирациональной поверхностью. ¬ дальнейшем будем считать, что обща€ квадрика $Q_s$ неприводима, все рассуждени€
%в приводимом случае аналогичны.
Combining Lemma \ref{ns} with the following simple fact we see that variety ${\mathcal X}$ is unirational over $\k$.

\begin{lemma}\label{unibund}
Consider a $\k$-unirational variety $S$ and the bundle $f: \mathcal X\to S$ with a general fiber isomorphic to an
irreducible quadric $Q_s$. The variety $\XXX$ is unirational over $\k$ provided the existence of a rational (that
is, defined over an open subset) section $l_s\in Q_s$ such that for a general point $s\in S$ the point
$\varphi(l_s)\in Q_s$ is nonsingular.
\end{lemma}
\begin{proof}
An irreducible quadric $\widetilde{Q_s}$ over the function field $\k(S)$ on the variety $S$ is unirational provided
the existence of a smooth $\k(S)$-point on $\widetilde{Q_s}$. We conclude by considering the inclusions of fields
$\k(\mathcal X)\subset\k(S)(t_1,\ldots,t_a)\subset\k(u_1,\dots,u_b)$ where $t_i$ and $u_i$ are the transcendental
variables and $a$ and $b$ are appropriate dimensions.
\end{proof}

The general generatrix $l\subset{\K_s}$ is tangent to $\CCC$ at $s$, so since $\XXX$ is not a cone (the multiplicity
of the vertex of such a cone would be equal to six), $l$ intersects ${\XXX}$ in exactly one other point $s_l$.
Consider rational map $f:\mathcal X\dashrightarrow\XXX$ such that $f(l)=s_l$. To prove Proposition \ref{S} it is
sufficient to check that $f$ is dominant. Due to $\k$-unirationality of ${\mathcal X}$, the generatrices of cones
$\K_s$ are dense in the set of generatrices of cones $\overline{\K_s}$, so it is sufficient to check the dominance
of $\overline{f}$ which follows from

\begin{lemma}\label{lll}
The variety $\overline{\QQQ'}$ sweeped out by cones $\overline{\K_s}\subset\overline{\QQQ}$ with $s\in\overline{S}$
coincides with the quadric $\overline{\QQQ}$.
\end{lemma}

\begin{proof}
If $\dim{\overline{\QQQ'}}\le n-3$ then for two general points $s, s'\in\overline S$ we have
$\overline{\K_{s}}=\overline{\K_{s'}}$ and thus $S\subset\Sing{\K_s}$ which contradicts Lemma \ref{ns}. Assume that
$\dim{\overline{\QQQ'}}=n-2$. Since $\overline{\QQQ'}$ contains a $(2\cdot(n-2)-3)$-dimensional family of lines,
$\overline{\QQQ'}$ is either a quadric or a one-dimensional family of the projective spaces (the fact proven in [6]
states that any $n$-dimensional projective variety containing a $2n-3$-dimensional family of lines is either a
quadric or a one-dimensional family of the projective spaces). The second case is impossible because for $n>5$ a
nonsingular quadric $\QQQ\subset\PP^n$ does not contain projective subspaces of dimension $n-3$.

In the first case, for an arbitrary point $s\in\overline{S}$ we have
$\overline{\K_s}=T_s(\overline{\QQQ'})\cap\overline{\QQQ'}$, so that
$$
\overline{S}\subset\Sing{(\overline{\CCC}\cap\overline{\QQQ'})}.
$$
Note that for $x\in\overline{S}\cap\Sing{\overline{\XXX}}$ we have $\overline{\QQQ'}\subset T_x(\overline \XXX)$. In
fact, over the field $\overline{k}=\CC$ it follows from the continuity considerations and over other fields we can
apply Lefschetz principle. Thus, the $(n-1)$-dimensional projective span of $\overline{\QQQ'}$ is tangent to the
$(n-2)$-dimensional variety $\overline{\XXX}$ along a surface $\overline{S}$ which contradicts F.\, L.\, Zak's
Theorem on Tangencies (see Corollary 1.8 in [10]). Thus, $\dim{\overline{\QQQ'}}=n-1$ and so
$\overline{\QQQ'}=\overline{\QQQ}$.
\end{proof}

Being the image of $\k$-unirational variety $\mathcal X$ under the dominant rational map, $\XXX$ is unirational over
$\k$ and Proposition \ref{S} is proved.
\end{proof}

\section{$\k$-Unirational Four-Dimensional Quartics}\label{1}
The main goal of this section is to prove the following

\begin{prop}\label{uniquad}
Let $Q$ be a three-dimensional nonsingular $\k$-rational quadric\footnote{A quadric is $\k$-rational if and only if
it is irreducible and has a smooth point.}. Then a general quartic $Y_4\subset\PP^5$ containing $Q$ with
multiplicity two is unirational over $\k$.
\end{prop}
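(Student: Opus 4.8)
The plan is to exhibit a birational map from $Y_4$ to a complete intersection $\XXX$ of a quadric and a cubic satisfying the hypotheses of Proposition \ref{S}, and then invoke that proposition. Write the equation of $Y_4\subset\PP^5$ as $q(x)^2 + q(x)\ell(x) + c(x) = 0$, where $q$ is the quadratic form cutting out $Q$ (with appropriate multiplicity-two structure along $Q$), $\ell$ is linear, and $c$ is a cubic; more precisely, since $Y_4$ contains $Q$ with multiplicity two, its equation lies in the ideal $(q)^2 + (q)\cdot\mathfrak{m}$, so after completing the square in the $q$-direction we may assume $Y_4 = \{q^2 = qc_1 + c_2\}$ for suitable forms. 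The key observation is that the projection away from the quadric threefold $Q$ — or rather the rational map given by the linear system of quadrics through $Q$, i.e.\ $[q : x_0 q' : \ldots]$, unwinding the double structure — transforms $Y_4$ into a complete intersection of type $(2,3)$ in a $\PP^n$ with $n\ge 6$. Concretely, introduce a new variable $w$ and consider the correspondence $w = c(x)/q(x)$ on $Y_4$; eliminating, one finds that $Y_4$ is birational to $\XXX = \{w^2 - w\,\ell(x) - (\text{quadratic in } x) = 0\} \cap \{\text{cubic relation}\}$ inside $\PP^6$. I would carry out this elimination carefully to pin down the bidegree and verify $\XXX$ is indeed cut out by one quadric and one cubic through the relevant point.

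Next I would verify that for \emph{general} $Y_4$ the resulting $\XXX\subset\PP^n$ ($n>5$) meets the hypotheses of Proposition \ref{S}: that the defining quadric $\QQQ$ is nonsingular, that $\XXX$ has only isolated singularities none of multiplicity $>2$, and — the crux — that $\XXX$ contains a $\k$-unirational surface $S\not\subset\Sing\XXX$ swept by a family of lines $\mathcal F$ with a unique line through a general point. Here is where the hypothesis that $Q$ is $\k$-rational enters decisively: $Q\cong\PP^3$ birationally over $\k$, hence $Q$ (or its strict transform / the exceptional locus of the birational map, which is a $\PP^1$- or conic-bundle over $Q$) is $\k$-rational, and it carries an obvious ruling. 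The surface $S$ should be obtained from a general $\PP^3$-section of $Q$, or from the exceptional divisor of the birational transformation restricted suitably to get dimension two; its ruling by lines is inherited from the linear structure. I would check that a general point of $S$ lies on a unique line of the family — this is typically automatic once $S$ is a rational ruled surface not equal to a cone and the ruling is the tautological one.

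The main obstacle I expect is two-fold. First, getting the birational model precisely right: one must confirm that the target really is a \emph{complete intersection} of a quadric and a cubic (not something of higher codimension or with extra components) and that the ambient dimension lands in the range $n>5$ where Proposition \ref{S} applies — this requires a clean choice of the resolving linear system and a genuine elimination, not just a dimension count. Second, and more delicate, is producing the $\k$-unirational ruled surface $S$ lying on $\XXX$ but \emph{not} inside $\Sing\XXX$, together with the genericity argument showing that for general $Y_4$ the singularities of $\XXX$ stay isolated and of multiplicity $\le 2$: one must rule out that the birational transformation introduces a locus of bad singularities (e.g.\ along the image of $Q$) that would swallow $S$. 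I would handle this by an explicit local computation at the generic point of the image of $Q$, using that $Q$ is smooth and that $Y_4$ is general among quartics with the prescribed double structure, so that the "residual cubic" $c_2$ is general and cuts $Q$ transversally. Once $S\subset\XXX$ is in place with its ruling, Proposition \ref{S} delivers $\k$-unirationality of $\XXX$, hence of $Y_4$.
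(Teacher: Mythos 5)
Your overall strategy --- realize $Y_4$ birationally as a complete intersection $\XXX$ of a quadric and a cubic and then feed $\XXX$ into Proposition \ref{S} --- is exactly the paper's, but the two steps you leave ``to be carried out'' are precisely where the content lies, and your sketches of them would not go through. First, the birational model. Your equation $q^2+q\ell+c=0$ is not homogeneous, and more importantly $Q$ has codimension two in $\PP^5$ (it is cut out by $q$ together with the linear form of its span $M$), so ``containing $Q$ with multiplicity two'' means the quartic lies in the square of the ideal $(q,\ell_M)$; a naive unprojection $w=c/q$ does not turn this into a $(2,3)$ complete intersection. The paper instead takes the cone $\K_{Y_4}\subset\PP^6$ over $Y_4$ with vertex at a general external point $x$, chooses a \emph{nonsingular} quadric $\QQQ\subset\PP^6$ containing the cone $\K_Q$ over $Q$, and observes that $\QQQ\cap\K_{Y_4}=\K_Q\cup\XXX$ with $\XXX$ of degree six, hence by Lefschetz $\XXX=\QQQ\cap\CCC_\QQQ$; the projection $\pi_x$ from the double point $x\in\XXX$ recovers $Y_4$. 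This lands in $\PP^6$, i.e. $n=6>5$ as Proposition \ref{S} requires.

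Second, and more serious, is your choice of the surface $S$. A general $\PP^3$-section of $Q$ is a quadric surface: through a general point of it pass \emph{two} lines, not one, and over a non-closed field the two rulings are typically conjugate, hence the family of lines is not defined over $\k$ (for the sphere over $\RR$, which is the case actually used in Section \ref{4}, $Q$ carries no real lines at all). So neither that surface nor the exceptional divisor of the blow-up carries the required family $\mathcal F$ defined over $\k$ with a unique line through a general point. The paper's $S$ is the cone with vertex $x$ over a $\k$-rational \emph{conic} $C\subset Q$: its ruling is by the lines through $x$, a $\k$-defined family with exactly one member through a general point, and $S$ is $\k$-rational. Making $S$ lie on $\XXX$ is itself nontrivial: one must choose $\QQQ$ in the seven-dimensional linear system of quadrics containing $\K_Q$ so that the residual cubic $\CCC_\QQQ$ contains $C$ (a dimension count), and then use that a line joining $x$ to a point of $\K_Q\cap\CCC_\QQQ$ meets $\CCC_\QQQ$ with multiplicity four and therefore lies on $\XXX$. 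Without these two constructions the proposal does not close.
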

\begin{proof}
Below we prove an auxiliary result, which holds in a more general framework for quartics of arbitrary dimension.
Consider an irreducible normal quartic $Y_4\subset\PP^{n-1}$, $n>3$, containing (not necessarily with multiplicity
two) an $(n-3)$-dimensional smooth quadric $Q$ (which is not necessarily rational over $\k$).

Consider a cone $\K_{Y_4}\subset\PP^n$ over $Y_4$ with vertex at a general point $x\in\PP^n$. Let
$\K_Q\subset\K_{Y_4}$ be a cone over the quadric $Q$ with the same vertex. Let $\QQQ\subset\PP^n$ be a nonsingular
quadric containing $\K_Q$. The following proposition reduces the question of $\k$-unirationality of $Y_4$ to the
question of $\k$-unirationality of a complete intersection of a quadric and a cubic $\XXX\subset\PP^n$.

\begin{prop}\label{proj}
$Y_4=\pi_x(\XXX)$, where ${\XXX}=\QQQ\cap\CCC_{\QQQ}$ is a complete intersection of $\QQQ$ and a cubic
${\CCC_\QQQ}\subset\PP^n$ and $\pi_x$ is the birational projection from the singular point $x\in\XXX$.
\end{prop}

\begin{proof}
Consider the intersection $\mathcal Q\cap \K_{Y_4}$. Since $\deg(\overline{\mathcal Q\cap \K_{Y_4}})=8$, we have
(scheme-theoretically)
$$
{\mathcal Q}\cap \K_{Y_4}=\K_Q\cup {\XXX},
$$
where $\overline{{\XXX}}\subset\overline{{\mathcal Q}}$ is a divisor of degree six. By Lefschetz theorem, variety
$\overline{\XXX}$ is an intersection of the quadric $\overline{\mathcal Q}$ with a cubic\footnote{Such a cubic is
not unique but our choice does not affect the exposition.} $\overline{\CCC_\QQQ}\subset\PP^n$. Since
$\overline{\XXX}$ is defined over $\k$, it is easy to show that we can choose cubic $\overline{\CCC_\QQQ}$ to be
also defined over $\k$ since the corresponding ideal has to be generated by a quadric and a cubic over $\k$. Note
that $\XXX$ is irreducible. In fact, otherwise it would be a union of a quadric and an intersection of two quadrics
(other possible cases are not allowed by Lefschetz theorem), which contradicts the normality of $Y_4$. Thus, the
projection $\pi_x:\XXX\dashrightarrow Y_4$ from the point $x$ is a surjective map of degree one and $x\in\XXX$ is a
double point.
\end{proof}

Consider now a general quartic $Y_4\subset\PP^5$ containing $\k$-rational quadric threefold $Q$ with multiplicity
two. Note that such $Y_4$ is normal since it's singular locus has codimension two by Lemma \ref{l}. Therefore,
taking into account Proposition \ref{proj}, to finish the proof of Proposition \ref{uniquad} it suffices to find
such a nonsingular quadric $\QQQ\supset\K_Q$, that the corresponding variety $\XXX$ is unirational over $\k$. Note
that $T_x(\XXX)=T_x(\QQQ)$ because $\mult_x(\XXX)=2$ and $\QQQ$ is nonsingular. Thus, we have
$$
T_x(\XXX)\cap\XXX=\K_{Q}\cap\XXX.
$$
Note that in our situation the quadric $Q$, being the image of the exceptional divisor of the blow up of the point
$x\in\XXX$, lies in the base of the tangent cone $\K_x(\XXX)$, so $\K_x(\XXX)=\K_Q$. Thus, if the cubic
$\CCC_{\QQQ}$ contains a nonsingular point $s\in\K_Q$ then $\CCC_{\QQQ}$ intersects a line joining $x$ and $s$ with
multiplicity four and therefore $\XXX$ contains this line.

Let $C\subset Q$ be a $\k$-rational curve. If for some nonsingular quadric $\QQQ\supset\K_Q$ the cubic $\CCC_\QQQ$
contains $C$, then, by the above speculation, the corresponding variety $\XXX$ contains the ($\k$-rational) cone $S$
over $C$ with the vertex in $x$ and variety $\XXX$ is unirational over $\k$ by Proposition \ref{S} if
$S\not\subset\Sing{\XXX}$. This condition holds for a general quartic $Y_4\subset\PP^5$ containing a $\k$-rational
nonsingular quadric threefold $Q$ with multiplicity two. In fact, general quartic of this type has only ordinary
double singularities, so a line $l\subset\Sing{\XXX}$ can not be contracted to such a point.

\begin{lemma}
For every conic $C\subset Q$ there exists a nonsingular quadric $\QQQ\supset\K_Q$ such that cubic $\CCC_\QQQ$
contains $C$.
\end{lemma}
\begin{proof}
It is not hard to compute that the dimension of the projectivised linear system of quadrics $\QQQ\subset\PP^6$
containing $\K_\QQQ$ equals to seven. The cubic $\CCC_\QQQ$ can correspond to not more than two quadrics since
$\QQQ\cap\CCC_\QQQ\subset\CCC_\QQQ\cap\K_{Y_4}$ and the intersections of the cubic with different quadrics are
different. Therefore, the dimension of the projectivised linear system of cubics $\CCC_\QQQ$ on $\K_{Y_4}$ also
equals to seven. The space of cubics containing a given conic $C$ has codimension seven in the space of all cubics,
so there exists such a quadric $\QQQ$ that $\CCC_{\QQQ}\supset C$.

If $\QQQ$ is singular then $\QQQ$ is a cone with vertex in $x$ over a quadric $Q_1$ such that $\PP^5_{Y_4}\supset
Q_1\supset Q$, where $\PP^5_{Y_4}$ is the projective span of $Y_4$. It is easy to show that there exists a
one-dimensional family with an open base of nonsingular five-dimensional quadrics $\QQQ'\supset(\K_Q\cup Q_1)$. Note
that the restrictions $R_\QQQ$ and $R_{\QQQ'}$ of cubics $\CCC_\QQQ$ and $\CCC_{\QQQ'}$ to the projective span of
quartic $Y_4$ are the same since they are given by the same equation $Q_1\cap Y_4=Q\cup R$. Therefore,
$\CCC_{\QQQ'}\cap\QQQ_1=\CCC_\QQQ\cap\QQQ_1$ and thus $\CCC_{\QQQ'}\supset C$ and we are done.
\end{proof}

Let $C$ be a $\k$-rational conic on $Q$ (such conics exist because $Q$ is rational over $\k$) and $\XXX$ be an
intersection $\QQQ\cap\CCC_\QQQ$ where $\CCC_{\QQQ}\supset C$. Due to above speculation, $\XXX$ is unirational over
$\k$. Since $Y_4$ is birational to $\XXX$, Proposition \ref{uniquad} is proved.
\end{proof}

\section{$\k$-Unirational Higher-Dimensional Quartics} \label{2}

%\begin{definition}
%–ассмотрим трехмерное проективное пространство $M$. ѕусть $Q\subset\PP^3$ --- двумерна€ неособа€ квадрика, а
%$Q'\subset\PP^3$ --- двумерный квадратичный конус, причем вершина $z$ конуса $Q'$ лежит на квадрике $Q$.
%ћногообразие $W=Q\cup Q'$ мы будем называть {\it колпаком}.
%\end{definition}
%«аметим, что если колпак $W$ определен над $\k$, то квадрика $Q$ и точка $z\in Q$ также определены над $\k$, поскольку
%конус не может быть √алуа-сопр€жен неособой квадрике, а вершина конуса определена над $\k$. ƒокажем теперь следующее

\begin{prop}\label{uni}
A general nonsingular quartic $H_4\subset\PP^n$ intersecting a four-dimensional space $M\subset\PP^n$ by a
$\k$-unirational quadric threefold $Q$ with multiplicity two is unirational over $\k$.
\end{prop}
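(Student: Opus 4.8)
The plan is to reduce Proposition \ref{uni} to the four-dimensional case treated in Proposition \ref{uniquad} by an inductive projection argument, decreasing $n$ by one at each step until we reach $\PP^5$. Fix a general nonsingular quartic $H_4\subset\PP^n$ with $H_4\cap M=2Q$ for a four-dimensional linear space $M=\PP^4$ and a $\k$-unirational quadric threefold $Q\subset M$. Since $Q$ is $\k$-unirational it carries a smooth $\k$-point $q$; I would choose a general $\k$-point $x\in M\setminus\overline{Q}$ (such points exist because $M$ has $\k$-points outside $Q$; for instance take a $\k$-line through $q$ transverse to $Q$ and pick a general $\k$-point on it) and project $H_4$ from $x$. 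Because $x\notin H_4$ for general $H_4$, the projection $\pi_x\colon H_4\dashrightarrow \PP^{n-1}$ is generically finite of degree $4$ onto $\PP^{n-1}$; but projecting instead from a $\k$-point $x\in Q$ does better. Here $x\in Q$ has $\mult_x H_4=2$ since $Q$ enters $H_4$ with multiplicity two, so $\pi_x$ realizes $H_4$ as a double cover, and after resolving one picks up a quartic $H_4'\subset\PP^{n-1}$ which is the image; more precisely $\pi_x(H_4)$ is a cubic hypersurface in $\PP^{n-1}$, which is the wrong degree, so the correct move is to keep $H_4$ a quartic by projecting away from $Q$.

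Concretely, I would argue as follows. Let $M'=\PP^5\subset\PP^n$ be a general $\k$-linear subspace containing $M$, so $M'\cap H_4$ is, by genericity, a nonsingular-away-from-$Q$ quartic fourfold $Y_4\subset\PP^5$ containing $Q$ with multiplicity two; by Proposition \ref{uniquad}, $Y_4$ is $\k$-unirational. Now take a general $\k$-point $x\in Y_4\subset H_4$ lying off $\Sing H_4$ (general $H_4$ is smooth, so every point works), and consider the projection $\pi_x\colon H_4\dashrightarrow\PP^{n-1}$ together with the family of lines through $x$. For a general line $\ell$ through $x$ inside $T_x(H_4)$, the intersection $\ell\cap H_4$ meets $H_4$ with multiplicity $\ge 2$ at $x$, hence in two further points; this exhibits a $2$-to-$1$ correspondence. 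The key point is that $Y_4$ is swept out by such lines as we let the residual point vary along $Y_4$, and $Y_4$ being $\k$-unirational and of codimension $n-5$ in $H_4$, a parameter count shows the union of lines through $x$ meeting $H_4$ again along $Y_4$ dominates $H_4$. Thus $H_4$ is dominated by a variety fibered in (open subsets of) lines over the $\k$-unirational base $Y_4$, and an argument identical to Lemma \ref{unibund} (a $\PP^1$-bundle, or rather a conic-bundle, over a $\k$-unirational base with a rational section is $\k$-unirational) gives $\k$-unirationality of $H_4$.

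The cleanest formulation is the direct cone-and-intersection argument used in Section \ref{1}: I would take the cone $\K_{Y_4}\subset\PP^n$ over $Y_4\subset\PP^5=M'$ with vertex at a general $\k$-point, intersect the ambient situation appropriately, and show that $H_4$ is covered by a family of lines joining points of $Y_4$ to points of $H_4$, so that the incidence variety maps dominantly to $H_4$ while fibering over $Y_4$. Specifically, for $s\in Y_4$ general I would consider the lines through $s$ tangent to $H_4$ at $s$ and lying in $T_s(H_4)\cap M''$ for suitable intermediate spaces $M'\subset M''\subset\PP^n$ with $\dim M''=\dim M'+1=6$; each such tangent line meets $H_4$ in one further point, and as $M''$ varies (a $\k$-rational family) these residual points sweep out all of $H_4\cap M''$, which by induction on $n$ one may assume $\k$-unirational with the relevant quadric structure. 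Feeding this through Proposition \ref{S}-type reasoning (a $\k$-unirational surface $S\subset H_4\cap M''$ covered by a one-parameter family of lines, here the cone over a $\k$-conic in $Q$), one climbs from $\PP^5$ to $\PP^6$ to $\dots$ to $\PP^n$.

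The main obstacle, and the step requiring genuine care, is verifying the dominance at each stage: one must check that the residual-point map from the line-incidence variety to $H_4$ (equivalently, to the higher-dimensional slice $H_4\cap M''$) is dominant and not merely into some proper subvariety. This is the exact analogue of Lemma \ref{lll} and its appeal to Zak's theorem on tangencies; here the potential degeneration is that all the relevant tangent lines through points of $Y_4$ might sweep out only a subvariety of $H_4$ tangent to $H_4$ along $Y_4$, which would contradict Zak's bound once one checks the dimension count $\dim(\text{swept locus})+\dim Y_4 \ge \dim H_4 + (\text{something})$ forces it to fill $H_4$. One also needs that genericity of $H_4$ keeps the intermediate slices $H_4\cap M''$ normal with singular locus only along $Q$ (so that Proposition \ref{uniquad}, or its inductive strengthening, applies) and that the needed $\k$-rational conic on $Q$ — which exists because $Q$ is $\k$-unirational hence has a smooth $\k$-point, and a smooth quadric threefold with a $\k$-point is $\k$-rational — survives the projection. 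Granting these dimension and genericity checks, the induction closes and $H_4$ is $\k$-unirational.
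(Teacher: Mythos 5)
There is a genuine gap, and it sits at the heart of your sweeping argument. You propose to cover $H_4$ by lines through a general point $s$ of the unirational slice $Y_4=H_4\cap M'$ that are tangent to $H_4$ at $s$, and you assert (in your ``cleanest formulation'') that ``each such tangent line meets $H_4$ in one further point.'' For a quartic this is false: a line tangent to $H_4$ at a smooth point meets it with multiplicity $2$ there and hence in \emph{two} further points, so the residual intersection is a $2$-valued correspondence, not a rational map --- you even say so yourself in your second paragraph (``hence in two further points; this exhibits a $2$-to-$1$ correspondence'') before silently dropping the factor of two. A degree-$2$ cover of a $\k$-unirational incidence variety need not be $\k$-unirational, so Lemma \ref{unibund}-type reasoning does not apply. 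The mechanism of Proposition \ref{S} cannot be transplanted here: there the lines are generatrices of $\K_s=\QQQ\cap T_s(\XXX)$, hence lie entirely inside the quadric $\QQQ$ and are tangent to the cubic $\CCC$ at $s$, which is exactly what cuts the residual intersection with the degree-six variety $\XXX$ down to a single point. No such structure is available on a bare quartic hypersurface; indeed, the absence of a single-residual-point construction for quartics without linear subspaces is precisely the difficulty the whole paper is built to circumvent (via Proposition \ref{proj}, which trades the quartic for a quadric-cubic intersection). Your inductive climb from $\PP^5$ to $\PP^n$ also quietly assumes an ``inductive strengthening'' of Proposition \ref{uniquad} to higher-dimensional slices, which is the statement being proved, so the induction is circular as set up. (A smaller slip: for $x\in Q$ you write $\mult_x H_4=2$, confusing the multiplicity of $Q$ in the cycle $H_4\cap M$ with the multiplicity of the nonsingular $H_4$ at $x$, which is $1$.)

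The paper's actual proof is much shorter and avoids lines altogether: it fibers $H_4$ by the sections $H_4\cap G$ over the $\PP^{n-5}$ of five-planes $G\supset M$, observes that the generic fiber is an irreducible quartic fourfold over the function field $\k(B)$ containing the quadric threefold $\widetilde Q$ with multiplicity two, that $\widetilde Q$ acquires $\k(B)$-points from the constant sections $B\times q$ with $q\in Q(\k)$ and is therefore unirational over $\k(B)$, and then applies Proposition \ref{uniquad} \emph{over the field $\k(B)$}. Unirationality of the total space over $\k$ follows as in Lemma \ref{unibund}, and $H_4$ is birational to that total space. The one idea you are missing is that Proposition \ref{uniquad} holds over an arbitrary field of characteristic zero and can be fed the function field of the base of the pencil of slices; with that observation the entire reduction is a single step with no dominance or Zak-type verification needed.
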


\begin{proof}
Let $\Psi_4^n$ be the universal family of four-dimensional quartics in $\PP^n$, that is the set of pairs
$\{(Y_4,y)\}$, where $Y_4\subset\PP^n$ is a four-dimensional quartic and $y\in Y_4$ is a point. Consider the family
$\Lambda\subset\Psi_4^n$ with base $B=\PP^{n-5}$ of various sections $H_4\cap G$  by five-dimensional spaces
$G\supset M$.

Note that a general quartic $Y_4$ from $\Lambda$ is irreducible. Otherwise, since $Y_4$ does not contain $M$ it
would be a union of two quadrics: $Y_4=\mathcal{Q}_1\cup\mathcal{Q}_2$. Thus,
$\mathcal{Q}_1\cap\mathcal{Q}_2\subset\Sing{Y_4}$, so $\dim{\Sing{Y_4}}\geqslant 3$ and by Bertini's theorem
$\Sing{Y_4}=Q$. Therefore, $\Sing{H_4}\supset Q$, which contradicts the assumption that $H_4$ is nonsingular.

Since $\Lambda\supset B\times Q$, the four-dimensional quartic $\widetilde{Y_4}$ over the function field $\k(B)$
corresponding to the general fiber of $\Lambda$ contains a nonsingular three-dimensional quadric $\widetilde Q$
defined over $\k(B)$, with multiplicity two and does not contain its projective span $\widetilde M=\PP^4_{\k(B)}$.
For each $\k$-point $q\in Q$ holds $\Lambda\supset B\times q$, so the quadric $\widetilde Q$ contains
$\k(B)$-points,  so it is unirational over $\k(B)$. Thus, quartic $\widetilde{Y_4}$ is unirational over $\k(B)$ by
Proposition \ref{uniquad}, so $\Lambda$ is unirational over $\k$ (see the similar reasoning in the proof of Lemma
\ref{unibund}).  Since $H_4$ is birational to $\Lambda$ (they are isomorphic outside $M\cap H_4$), Proposition
\ref{uni} is proved.
\end{proof}

\begin{zam}
It can be proved (see [1] for the general result about the maximal variation of the linear sections of hypersurfaces
of low degree in the corresponding moduli space) that for a big $n$ general quartic in $\PP^n$ over an algebraically
closed field contains a smooth quadric threefold with multiplicity two and does not contain its projective span.
This observation gives an alternative proof of unirationality of higher-dimensional quartics (see other proofs in
[1], [3], [4], [5]).
\end{zam}

\section{Examples of nonsingular $\k$-Unirational Quartics without Lines}\label{4}
Consider the field $\RR$ of real numbers. Let $Q$ be a real sphere
$$
Q\subset M\subset\PP^n,\qquad  n\geqslant 8,
$$
given by equation
$$
f=x_0^2+x_1^2+x_2^2+x_3^2-x_4^2=0
$$
in homogeneous coordinates $(x_0:\ldots:x_4)$ on $M=\PP^4$. Denote by $\BQ$ the double quadric $Q$, that is, the
quartic given by equation $f^2=0$ in $M$. Consider a general hyperplane $\Gamma\subset\PP^n$ which does not
intersect $\BQ$. Note that the set of $(n-1)$-dimensional real quartics containing $\BQ$ and not intersecting
$\Gamma$ is open in the standard analytic real topology and is not empty since it contains, for example (for a far
enough hyperplane $\Gamma$) a quartic given by equation $f^2+x_4^4+\dots+x_n^4=0$. Since $\dim{\Sing{\BQ}}=3$,
general (in Zariski topology) $(n-1)$-dimensional real quartic containing $\BQ$ is nonsingular by the following
simple fact, the proof of which is straightforward.

\begin{lemma}\label{l}
Let $H^{N-1}_d\subset\PP^N$ be a hypersurface of degree $d$ over $\overline\k$ such that
$$
\dim{\Sing{H^{N-1}_d}}=m.
$$
Then for the general hypersurface $H_d^{N+k-1}\supset H_d^{N-1}$ we have
$$
\dim\Sing{H_d^{N+k-1}}=m-k.
$$

\end{lemma}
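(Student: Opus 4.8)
The plan is to compute the dimension of the relevant incidence variety and project it to the space of hypersurfaces containing $H_d^{N-1}$, tracking where the singularities can sit. First I would set up coordinates: choose homogeneous coordinates on $\PP^{N+k}$ so that $\PP^N$ is the subspace $\{x_{N+1}=\dots=x_{N+k}=0\}$, and let $F$ be a degree-$d$ form cutting out $H_d^{N-1}$ inside $\PP^N$. A hypersurface $H_d^{N+k-1}\supset H_d^{N-1}$ is cut out by a form $G = F + \sum_{j=1}^{k} x_{N+j} A_j + (\text{terms of degree}\ge 2\ \text{in the}\ x_{N+j})$, where the $A_j$ range over all forms of degree $d-1$ on $\PP^{N+k}$. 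The key observation is that the partial derivatives $\partial G/\partial x_{N+j}$ restricted to $\PP^N$ are exactly the $A_j$ (up to the higher-order terms, which vanish on $\PP^N$), so on $\PP^N$ we have great freedom in prescribing these normal derivatives.

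Next I would analyze $\Sing(H_d^{N+k-1})\cap \PP^N$ versus its part away from $\PP^N$. A point $p\in\PP^N$ is singular on $H_d^{N+k-1}$ iff all $\partial G/\partial x_i(p)=0$; for $i\le N$ this forces $p\in\Sing(H_d^{N-1})$ (since those derivatives restrict to the derivatives of $F$), and for $i=N+j$ it forces $A_j(p)=0$. So $\Sing(H_d^{N+k-1})\cap\PP^N$ is the locus in $\Sing(H_d^{N-1})$ where all $k$ chosen forms $A_1,\dots,A_k$ vanish. For a general choice of the $A_j$ — and here one uses that $\Sing(H_d^{N-1})$ has dimension $m$ and that the linear system of degree-$(d-1)$ forms separates points and cuts proper subvarieties, so $k$ general hyperplane-type conditions drop the dimension by exactly $k$ at each step — this locus has dimension $m-k$ (and is empty if $k>m$). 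For the part of $\Sing(H_d^{N+k-1})$ lying off $\PP^N$, I would run the standard incidence-variety dimension count: the condition that $G$ be singular at a fixed point off $\PP^N$ imposes $N+k+1$ independent linear conditions on the coefficients of $G$, so generically (as $d\ge 2$ makes the singular locus of a general such $G$ behave as expected) the singular locus off $\PP^N$ has the expected dimension $\le (\text{moduli dimension}) - (\text{something}) $, which is negative once we intersect with the general member, i.e. a general $H_d^{N+k-1}\supset H_d^{N-1}$ is smooth away from $\PP^N$. Combining the two pieces gives $\dim\Sing(H_d^{N+k-1}) = m-k$.

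The main obstacle is making the genericity argument for the "off $\PP^N$" part fully rigorous: one must check that imposing "$H_d^{N-1}\subset H_d^{N+k-1}$" does not force extraneous singularities elsewhere, i.e. that the linear subsystem of forms $G$ of the shape above still has a smooth general member away from the fixed locus $\PP^N$. The clean way is a Bertini-type argument: this linear system is base-point-free away from $\PP^N$ (the higher-order terms $x_{N+j}x_{N+l}(\cdots)$ and the quadratic terms $x_{N+j}^2 x_0^{d-2}$ already suffice to move things), so by Bertini the general member is smooth outside $\PP^N$, and the behaviour along $\PP^N$ is exactly the vanishing-locus computation above; the reduction to the case $k=1$ by induction, iterated $k$ times, keeps each step to a single general hyperplane-section-type condition on $\Sing(H_d^{N-1})$ and so yields the stated drop of $k$ in dimension.
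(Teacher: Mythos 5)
The paper does not actually prove this lemma --- it is stated with ``the proof of which is straightforward'' --- so there is no argument of the author's to compare against; your proof is correct and is surely the intended one. The two-part decomposition is exactly right: on $\PP^N$ the singular locus of the general member $G=F+\sum_j x_{N+j}A_j$ is $\Sing(H_d^{N-1})\cap\{A_1=\cdots=A_k=0\}$, which for general degree-$(d-1)$ forms $A_j$ has dimension exactly $m-k$, while off $\PP^N$ the heuristic incidence count in your second paragraph can simply be discarded in favour of your final Bertini argument, since the base locus of the linear system $\{F+\sum_j x_{N+j}A_j\}$ is precisely $H_d^{N-1}\subset\PP^N$, so the general member is smooth away from $\PP^N$.
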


Note that any non empty set open in analytic topology always meets an open non empty set in Zariski topology, so
there exists a nonsingular quartic $H_4\subset\PP^n$ which contain $\BQ$, satisfy the conditions of Proposition
\ref{uni} and does not intersect hyperplane $\Gamma$. In particular, $H_4$ does not contain any real lines and
yields the required example, so Theorem \ref{th} is proved.

\begin{zam}
In the examples proving Theorem \ref{th} we use two properties of field $\RR$. The first one is the existence of an
element $r\in\RR$ which can not be decomposed into a sum of squares of real numbers (this property is necessary for
the construction of $\k$-rational quadric $Q$ which does not contain lines), and the second is the fact that a non
empty set open in the analytic topology always intersects a non empty set open in Zariski topology. Such fields as
$\RR(t)$ have the same properties and similar examples are valid for them. Also, since rational points are dense in
an open set in analytic topology, the statement of Theorem \ref{th} also holds for field $\QQ$ as well as for
$\QQ(t)$.
\end{zam}

\medskip

The author is grateful to S.\,Galkin, S.\,Gorchinskiy, V.\,Iskovskikh, Yu.\,Prokhorov, K.\,Shra\-mov and F.\,Zak for
useful discussions.

\bigskip

\end{document}